\pdfoutput=1
\documentclass{siamltex}

\usepackage{latexsym,amssymb,amsmath,enumerate,bbm,graphicx,psfrag,color}
\usepackage[latin1]{inputenc}
\usepackage{url}

\definecolor{mygray}{rgb}{0.3333,0.3333,0.3333}

\def\norm#1{\hspace{0.2ex} \|#1\| \hspace{0.2ex}} 

\newcommand{\labeq}[1]{\label{eq:#1}}
\def\req#1{{\rm(\ref{eq:#1})}}

\newcommand{\E}{\ensuremath{\mathbbm{E}}} 
\newcommand{\R}{\ensuremath{\mathbbm{R}}} 
\newcommand{\N}{\ensuremath{\mathbbm{N}}} 
\newcommand{\range}{\mathcal R}

\newcommand{\dx}[1][x]{\ensuremath{\,{\rm{d}} #1}}

\newcommand{\Ld}{L_\diamond}
\newcommand{\Hd}{H_\diamond}
\newcommand{\LL}{\mathcal L}

\newcommand{\Norm}[1]{\left\lVert#1\right\rVert}

\newcommand{\kommentar}[1]{}

\newtheorem{example}[theorem]{Example}
\newtheorem{remark}[theorem]{Remark}

\hyphenation{mo-not-o-ny}

\begin{document}

\title{Detecting stochastic inclusions in electrical impedance tomography}
       
\author{Andrea Barth\footnotemark[2], Bastian Harrach\footnotemark[3], Nuutti Hyv\"onen\footnotemark[4], and Lauri Mustonen\footnotemark[4]}
\renewcommand{\thefootnote}{\fnsymbol{footnote}}

\footnotetext[2]{Department of Mathematics, University of Stuttgart, Stuttgart, Germany (andrea.barth@math.uni-stuttgart.de).}
\footnotetext[3]{Institute of Mathematics, Goethe University Frankfurt, Frankfurt, Germany (harrach@math.uni-frankfurt.de). The work of BH was supported by the German Research Foundation (DFG) within the Cluster of Excellence in Simulation Technology (EXC 310/1) at the University of Stuttgart.}
\footnotetext[4]{Aalto University, Department of Mathematics and Systems Analysis, P.O. Box 11100, FI-00076 Aalto, Finland (nuutti.hyvonen@aalto.fi, lauri.mustonen@aalto.fi). The work of NH and LM was supported by the Academy of Finland (decision 267789).}
 
\maketitle

\pagestyle{myheadings}
\thispagestyle{plain}
\markboth{A. BARTH, B. HARRACH, N. HYV\"ONEN, AND L. MUSTONEN}{DETECTING STOCHASTIC INCLUSIONS IN EIT}

\begin{abstract}
This work considers the inclusion detection problem of electrical impedance tomography with stochastic conductivities. It is shown that a conductivity anomaly with a random conductivity can be identified by applying the Factorization Method or the Monotonicity Method to the mean value of the corresponding Neumann-to-Dirichlet map provided that the anomaly has high enough contrast in the sense of expectation. The theoretical results are complemented by numerical examples in two spatial dimensions.
\end{abstract}

\begin{keywords} 
Electrical impedance tomography, stochastic conductivity, inclusion detection, factorization method, monotonicity method 
\end{keywords}

\begin{AMS}
Primary: 35R30, 35R60; Secondary: 35J25.
\end{AMS}

\section{Introduction}
\label{Sec:intro}

{\em Electrical impedance tomography} (EIT) is an imaging modality for finding information about the internal conductivity of an examined physical body from boundary measurements of current and potential. The reconstruction task of EIT is nonlinear and highly illposed. EIT has potential applications in,~e.g.,~medical imaging, geophysics, nondestructive testing of materials and monitoring of industrial processes. For general information on EIT,
we refer to \cite{adler2011electrical,bayford2006bioimpedance,Borcea02,borcea2003addendum,Cheney99,holder2004electrical,lionheart2003eit,metherall1996three,Uhlmann09} and the references therein.

This work considers detection of {\em stochastic inclusions} by EIT. The conductivity inside the examined object is assumed to consist of a known deterministic background with embedded inclusions within which the conductivity is a random field. The inclusion shapes are deterministic. The available measurement is assumed to be the expected value of the corresponding Neumann-to-Dirichlet boundary operator, i.e., the expectation of the idealized current-to-voltage map.  We demonstrate that the Factorization Method and the Monotonicity Method can be utilized to reconstruct the supports of the stochastic inhomogeneities essentially in the same way as in the deterministic case if the inhomogeneities exhibit high enough conductivity contrast in the sense of expectation. Detecting inclusions with random conductivities from expected boundary measurements can be regarded an interesting problem setting from a mere mathematical standpoint, but it also has potential applications: consider,~e.g.,~temporally averaged boundary measurements corresponding to an inhomogeneity whose conductivity varies in time (e.g.,~a cancerous lesion subject to blood flow).

To the best of our knowledge, this is the first work that considers reconstructing properties of a random conductivity from the expectation of the boundary measurements. With that being said, it should be mentioned that for inverse scattering in the half-plane it has been shown that a single realization of the backscatter data at all frequencies uniquely defines the principal symbol of the covariance operator for a random scattering potential or a Robin parameter in the boundary condition~\cite{Helin14,Lassas08}. Moreover, time reversal related inverse scattering techniques have been used extensively for the detection of inclusions embedded in random media; see,~e.g.,~\cite{Bal05, Bal03, Borcea02b} and the references therein.

The Factorization Method was originally introduced  in the framework of inverse obstacle scattering by Kirsch \cite{Kirsch98}, and it was subsequently carried over to EIT by Br\"uhl and Hanke \cite{Bruhl01,Bruhl00}. The Factorization Method has attracted a considerable amount of attention in the scientific community over the past twenty years; in the context of EIT we refer the reader to \cite{chaulet2014factorization,choi2014regularizing,gebauer2006factorization,gebauer2008localized,Gebauer07,hakula2009computation,hanke2003recent,hanke2008factorization,harrach2009detecting,Harrach10,hyvonen2004complete,hyvonen2007numerical,kirsch2005factorization,Lechleiter08,nachman2007imaging,schmitt2009factorization,schmitt2011factorization}
and the overview articles \cite{Hanke11,harrach2013recent,Kirsch07} for more information on the developments since the seminal article of Kirsch. The Monotonicity Method \cite{harrach2013monotonicity} is a more recent inclusion detection technique that can be considered natural for EIT as it takes explicitly advantage of the coercivity of the underlying conductivity equation. In particular, unlike the Factorization Method, a variant of the Monotonicity Method can handle indefinite inclusions,~i.e.,~it stays functional even if some parts of the inhomogeneities are more and some other parts less conductive than the background. Monotonicity-based methods were first formulated and numerically tested by Tamburrino and Rubinacci \cite{Tamburrino06,Tamburrino02}. Their rigorous justification
is given in \cite{harrach2013monotonicity} based on the theory of localized potentials~\cite{gebauer2008localized}.

For simplicity, we employ the idealized {\em continuum model} for EIT, that is, we unrealistically assume to be able to drive any square-integrable current density into the object of interest and to measure the resulting potential everywhere on the boundary. Consult \cite{Hyvonen09} for analysis of the relationship between the continuum model and the {\em complete electrode model}, which is the most accurate model for real-world EIT measurements~\cite{Cheng89,Somersalo92}. Observe also that the Factorization Method has, in fact, been demonstrated to function also for realistic electrode measurements \cite{chaulet2014factorization,Harrach10,Lechleiter08}, and that the monotonicity method allows to characterize the achievable resolution in realistic measurement settings \cite{harrach2015resolution}.

This text is organized as follows. In Section~\ref{Sec:setting}, we introduce the stochastic Neumann-to-Dirichlet boundary map corresponding to a random conductivity field and examine how the fundamental monotonicity results for the related quadratic forms are carried over from the deterministic case. Subsequently, Section~\ref{Sec:detection1} formulates and proves the theorems that demonstrate the functionality of the Factorization and Monotonicity Methods for detecting stochastic inclusions. The two-dimensional numerical examples for the Factorization Method are presented in Section~\ref{Sec:numerics}; the stochastic measurement data are simulated by applying the stochastic finite element method (cf.,~e.g.,~\cite{Schwab11}) to the conductivity equation with a random coefficient.

\section{The setting and monotonicity results}

\label{Sec:setting}

In this section, we first review certain monotonicity properties of the deterministic Neumann-to-Dirichlet map associated with the conductivity equation. These inequalities are subsequently interpreted from the viewpoint of stochastic conductivities.

\subsection{The deterministic Neumann-to-Dirichlet operator}\label{subsect:NtD_det}

Let $D\subset\R^n$, $n\geq 2$ be a bounded domain with a smooth boundary $\partial D$ and outer unit normal vector~$\nu$. The subset of $L^\infty(D)$-functions with positive essential infima is denoted by $L_+^\infty(D)$. Moreover, $\Hd^1(D)$ and $\Ld^2(\partial D)$ are defined to be the subspaces of $H^1(D)$- and $L^2(\partial D)$-functions, respectively, with vanishing integral means over $\partial D$, i.e.,
\begin{align*}
\Hd^1(D)&= \left\{ u\in H^1(D):\ \int_{\partial D} u \dx[s]=0 \right\} ,\\[2mm]
\Ld^2(\partial D)&= \left\{ u\in L^2(\partial D):\ \int_{\partial D} u \dx[s]=0 \right\}.
\end{align*}

In the context of EIT, $D$ describes the imaging subject. A deterministic conductivity inside the subject is modeled by a function 
\[
\sigma:\ D\to \R 
\]
such that $\sigma\in L_+^\infty(D)$. The deterministic nonlinear forward operator of EIT is defined by
\[
L_+^\infty(D) \ni \sigma \mapsto \Lambda(\sigma) \in \LL(\Ld^2(\partial D)),
\]
where 
\[
\Lambda(\sigma): \ g\mapsto u_\sigma^g|_{\partial D}
\]
and $u_\sigma^g\in \Hd^1(D)$ solves
\begin{align}\labeq{EIT}
\nabla\cdot  \sigma \nabla u_\sigma^g  = 0 \quad {\rm in} \ D, \qquad \sigma \partial_\nu u_\sigma^g = g \quad {\rm on} \ \partial D .
\end{align}
The bilinear form associated with $\Lambda(\sigma)$ is
\begin{equation}\labeq{bilinear_form}
\int_{\partial D} g \Lambda(\sigma) h \dx[s] 
= \int_D \sigma \nabla u_\sigma^g \cdot \nabla u_\sigma^h \dx.
\end{equation}
It is well-known that $\Lambda(\, \cdot \,)$ is continuous and for each $\sigma\in L^\infty_+(D)$, the linear operator $\Lambda(\sigma):\Ld^2(\partial D) \to \Ld^2(\partial D)$ is self-adjoint and compact.

A useful tool in studying $\Lambda$ is the following monotonicity result
from Ikehata, Kang, Seo, and Sheen \cite{ikehata1998size,kang1997inverse}. It has frequently been used in deterministic inclusion detection, cf.,~e.g.,~\cite{harrach2013recent,harrach2009detecting,harrach2010exact,harrach2013monotonicity,ide2007probing,kirsch2005factorization}.

\begin{lemma}\label{lemma:IKSS}
Let $\sigma_0,\sigma_1\in L^\infty_+(D)$. Then, for all $g\in \Ld^2(\partial D)$,
\begin{align*}
 \int_D(\sigma_1-\sigma_0) |\nabla u_0|^2 \dx 
&\geq \int_{\partial D} g \left(\Lambda(\sigma_0)-\Lambda(\sigma_1)\right)g \dx[s]\\
&\geq \int_D \sigma_0^2 \left( \frac{1}{\sigma_0}-\frac{1}{\sigma_1} \right)|\nabla u_0|^2\dx,
\end{align*}
with $u_0:=u^g_{\sigma_0}\in H^1_\diamond(D)$ solving \req{EIT}.
\end{lemma}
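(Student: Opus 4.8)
The plan is to prove both inequalities by elementary ``completing the square'' arguments, using only the bilinear form identity \req{bilinear_form} together with the weak formulation of \req{EIT}. I would write $u_0 = u^g_{\sigma_0}$ and $u_1 = u^g_{\sigma_1}$ for the two solutions and abbreviate the boundary quadratic forms as $Q_i := \int_{\partial D} g\,\Lambda(\sigma_i) g \dx[s] = \int_D \sigma_i |\nabla u_i|^2 \dx$ for $i=0,1$, so that the middle term in the claim is exactly $Q_0 - Q_1$. The two facts I would extract from the weak form $\int_D \sigma_i \nabla u_i\cdot\nabla v \dx = \int_{\partial D} g v \dx[s]$, valid for all $v\in\Hd^1(D)$, are the ``cross'' identities
\begin{equation*}
\int_D \sigma_1 \nabla u_1\cdot\nabla u_0 \dx = \int_{\partial D} g u_0 \dx[s] = Q_0, \qquad \int_D \sigma_0 \nabla u_0\cdot\nabla u_1 \dx = \int_{\partial D} g u_1 \dx[s] = Q_1 ,
\end{equation*}
obtained by testing the equation for $u_1$ with $v=u_0$ and the equation for $u_0$ with $v=u_1$, respectively.

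For the upper bound I would show that the slack $\int_D(\sigma_1-\sigma_0)|\nabla u_0|^2\dx - (Q_0-Q_1)$ equals $\int_D \sigma_1\,|\nabla(u_1-u_0)|^2\dx$. Indeed, after replacing $\int_D\sigma_0|\nabla u_0|^2\dx$ by $Q_0$, the slack becomes $\int_D\sigma_1|\nabla u_0|^2\dx - 2Q_0 + Q_1$, which is precisely the expansion of $\int_D\sigma_1|\nabla(u_1-u_0)|^2\dx$ once the cross term $2\int_D\sigma_1\nabla u_1\cdot\nabla u_0\dx$ is identified with $2Q_0$ via the first identity. Since $\sigma_1\in L^\infty_+(D)$, this square is nonnegative, which gives the first inequality.

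For the lower bound I would instead work with the fluxes. Using the algebraic identity $\sigma_0^2(\tfrac1{\sigma_0}-\tfrac1{\sigma_1}) = \sigma_0 - \sigma_0^2/\sigma_1$, the claim is equivalent to $\int_D \tfrac{\sigma_0^2}{\sigma_1}|\nabla u_0|^2\dx - Q_1 \geq 0$. I would recover exactly this expression by expanding the manifestly nonnegative integral $\int_D \tfrac1{\sigma_1}\,|\sigma_0\nabla u_0 - \sigma_1\nabla u_1|^2\dx$: the square of the first flux contributes $\int_D \tfrac{\sigma_0^2}{\sigma_1}|\nabla u_0|^2\dx$, the square of the second contributes $Q_1$, and the cross term equals $2\int_D\sigma_0\nabla u_0\cdot\nabla u_1\dx = 2Q_1$ by the second identity, so the whole integral reduces to $\int_D \tfrac{\sigma_0^2}{\sigma_1}|\nabla u_0|^2\dx - Q_1$. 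Nonnegativity then yields the second inequality.

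Conceptually, these two computations are the primal (Dirichlet) and the dual (complementary-energy) variational principles in disguise: the upper bound amounts to inserting the suboptimal potential $u_0$ into the energy functional for $\sigma_1$, while the lower bound amounts to inserting the suboptimal admissible flux $\sigma_0\nabla u_0$ into the complementary energy for $\sigma_1$. I do not expect a genuine analytic obstacle, since well-posedness of \req{EIT} and the identity \req{bilinear_form} are already available; the only thing requiring care is the bookkeeping of which solution is used as a test function in which weak form, and the choice of the correct square to complete, namely potentials for the upper estimate and fluxes for the lower one.
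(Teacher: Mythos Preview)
Your argument is correct: both inequalities follow from exactly the completing-the-square computations you describe, and the cross identities you extract from the weak formulation are the right ones. The paper itself does not spell out a proof but merely refers to \cite[Lemma~3.1]{harrach2013monotonicity}; the argument given there is precisely the one you outline (expanding $\int_D \sigma_1|\nabla(u_1-u_0)|^2\dx$ for the upper bound and, equivalently to your flux square, $\int_D \sigma_1^{-1}|\sigma_0\nabla u_0-\sigma_1\nabla u_1|^2\dx$ for the lower bound), so your proposal matches the intended proof.
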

\begin{proof}
A short proof can be found, e.g., in \cite[Lemma~3.1]{harrach2013monotonicity}.
\end{proof}

For two bounded self-adjoint operators
$A,B:\ \Ld^2(\partial D)\to \Ld^2(\partial D)$ we write
\[
A\geq B
\]
if the inequality holds in the sense of quadratic forms, i.e., if
\[
\int_{\partial D} g \left( A - B \right)g \dx[s]\geq 0\quad \mbox{ for all } g\in L^2_\diamond(\partial D).
\]
With this convention, Lemma~\ref{lemma:IKSS} yields that for all $\sigma_0,\sigma_1\in L^\infty_+(D)$,
\begin{align}\labeq{mono_direct}
\sigma_0\leq \sigma_1 \quad \text{ implies } \quad \Lambda(\sigma_1)\leq \Lambda(\sigma_0),
\end{align}
which is why we call Lemma~\ref{lemma:IKSS} a \emph{monotonicity estimate}.

\subsection{The stochastic Neumann-to-Dirichlet operator}\label{subsect:NtD_stoch}

We model stochastic objects by treating the conductivity as a random field
\[
\sigma:\ \Omega\to L^\infty_+(D),
\]
where $(\Omega,\Sigma,P)$ is a probability space. By pointwise concatenation
\[
\Lambda(\sigma)(\omega):=\Lambda(\sigma(\omega))
\]
the Neumann-to-Dirichlet operator becomes random as well.

To be able to rigorously introduce moments (such as expectation or variance), we recall the definition of the \emph{Lebesgue--Bochner space} $L^p(\Omega;X)$:
\begin{definition}
Given a Banach space $X$, a function 
\[
f:\ \Omega\to X
\]
is called \emph{strongly measurable} if it is measurable (with respect to $\Sigma$ and the Borel algebra on $X$) and essentially separably valued. The space $L^p(\Omega;X)$, $1\leq p< \infty$ 
is the set of (equivalence classes with respect to almost everywhere equality of)
strongly measurable functions
satisfying
\[
\norm{f}_{L^p(\Omega;X)}
:=
\left( \int_{\Omega} \| f(\omega) \|_{X}^{p} \dx[P] \right)^{1/p} < \infty.
\] 
For $Y\subseteq X$, $L^p(\Omega;Y)$ is defined as the subset of those $f\in L^p(\Omega;X)$ for which $f(\omega)\in Y$ almost surely.
\end{definition}

The following lemma shows that the expected value of the Neumann-to-Dirichlet operator $\Lambda(\sigma)$ is well defined if both the conductivity $\sigma$ and its reciprocal, the resistivity $\sigma^{-1}$, have well defined expectations.
\begin{lemma}
Let $\sigma,\sigma^{-1}\in L^1(\Omega;L^\infty_+(D)) \subseteq L^1(\Omega;L^\infty(D))$. 

Then 
$
\Lambda(\sigma)\in L^1(\Omega;\LL(\Ld^2(\partial D))),
$
and the following expectations are well defined:
\begin{align*}
\E(\sigma)&:=\int_\Omega \sigma(\omega) \dx[P]\in L^\infty_+(D),\\
\E(\sigma^{-1})&:=\int_\Omega \sigma^{-1}(\omega) \dx[P]\in L^\infty_+(D),\\
\E(\Lambda(\sigma))&:=\int_\Omega \Lambda(\sigma)(\omega) \dx[P]\in \LL(\Ld^2(\partial D)).
\end{align*}
Furthermore, $\E(\Lambda(\sigma)): \Ld^2(\partial D) \to \Ld^2(\partial D)$ is self-adjoint and compact.
\end{lemma}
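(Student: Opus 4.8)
The plan is to establish the three assertions in sequence — Bochner integrability of $\Lambda(\sigma)$, the claimed ranges of the three displayed expectations, and the self-adjointness and compactness of $\E(\Lambda(\sigma))$ — using throughout the Bochner integrability criterion (a strongly measurable function is integrable exactly when its norm is integrable) together with the monotonicity relation \req{mono_direct}. First I would settle strong measurability: since $\sigma\in L^1(\Omega;L_+^\infty(D))$ it is strongly measurable into $L^\infty(D)$, hence an almost sure limit of simple functions, and because $\Lambda(\,\cdot\,)$ is continuous (as recalled above) the composition $\omega\mapsto\Lambda(\sigma(\omega))$ is again an almost sure limit of $\LL(\Ld^2(\partial D))$-valued simple functions, and therefore strongly measurable.

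Next I would produce a pointwise operator-norm bound linking $\Lambda(\sigma)$ to the hypothesis on $\sigma^{-1}$. The quadratic form $\int_{\partial D} g\,\Lambda(\sigma)g\dx[s]=\int_D\sigma|\nabla u_\sigma^g|^2\dx$ is nonnegative, so each $\Lambda(\sigma(\omega))$ is a nonnegative self-adjoint operator; comparing $\sigma(\omega)$ from below with the constant $\essinf\sigma(\omega)$ and invoking \req{mono_direct} gives $0\le\Lambda(\sigma(\omega))\le\Lambda(\essinf\sigma(\omega))$. For a constant conductivity $c$ one has $\Lambda(c)=c^{-1}\Lambda(1)$, so, since $0\le A\le B$ implies $\|A\|\le\|B\|$,
\begin{equation*}
\| \Lambda(\sigma(\omega)) \|_{\LL(\Ld^2(\partial D))}
\le \frac{\| \Lambda(1) \|}{\essinf\sigma(\omega)}
= \| \Lambda(1)\| \, \| \sigma^{-1}(\omega) \|_{L^\infty(D)} .
\end{equation*}
Integrating over $\Omega$ and using $\sigma^{-1}\in L^1(\Omega;L^\infty(D))$ shows $\int_\Omega\| \Lambda(\sigma(\omega)) \|\dx[P]<\infty$, whence $\Lambda(\sigma)\in L^1(\Omega;\LL(\Ld^2(\partial D)))$ and the Bochner integral $\E(\Lambda(\sigma))$ is well defined in $\LL(\Ld^2(\partial D))$; the integrals $\E(\sigma)$ and $\E(\sigma^{-1})$ exist in $L^\infty(D)$ directly from the hypotheses.

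I would then pin down the ranges of the scalar expectations. Testing the Bochner integrals against arbitrary $\psi\in L^1(D)$ and applying Fubini's theorem identifies $\E(\sigma)$ and $\E(\sigma^{-1})$ almost everywhere with the pointwise integrals $x\mapsto\int_\Omega\sigma(\omega)(x)\dx[P]$ and $x\mapsto\int_\Omega\sigma^{-1}(\omega)(x)\dx[P]$. Jensen's inequality for the convex map $t\mapsto 1/t$ then yields, for almost every $x$, the estimate $\E(\sigma)(x)\ge 1/\E(\sigma^{-1})(x)\ge\| \E(\sigma^{-1}) \|_{L^\infty(D)}^{-1}>0$, so that $\E(\sigma)\in L_+^\infty(D)$; exchanging the roles of $\sigma$ and $\sigma^{-1}$ gives $\E(\sigma^{-1})\in L_+^\infty(D)$.

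Finally, self-adjointness and compactness follow from the general fact that a Bochner integral of an almost surely $M$-valued function lies in any norm-closed subspace $M\subseteq\LL(\Ld^2(\partial D))$ (apply the continuous functionals annihilating $M$ and pass them through the integral). Since the self-adjoint compact operators form such a closed subspace and every $\Lambda(\sigma(\omega))$ is self-adjoint and compact, $\E(\Lambda(\sigma))$ is self-adjoint and compact. I expect the main obstacle to be the quantitative norm bound above, which is what ties integrability of $\Lambda(\sigma)$ to the assumption $\sigma^{-1}\in L^1(\Omega;L^\infty(D))$, together with the positivity of the mean conductivities: the latter is easily taken for granted but genuinely requires Jensen's inequality rather than following from the pointwise positivity of $\sigma$ alone.
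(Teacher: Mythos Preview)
Your proof is correct, and the overall structure matches the paper's: strong measurability via continuity of $\Lambda(\,\cdot\,)$, a pointwise operator-norm bound to secure Bochner integrability, and closure arguments for self-adjointness and compactness. The main substantive difference lies in the norm bound. The paper compares $\Lambda(\sigma(\omega))$ with $\Lambda(1)$ and invokes the full two-sided monotonicity estimate of Lemma~\ref{lemma:IKSS} to control $|\langle g,(\Lambda(\sigma)-\Lambda(1))g\rangle|$, arriving at
\[
\frac{\|\Lambda(\sigma(\omega))\|}{\|\Lambda(1)\|}\le 2+\max\bigl\{\|\sigma(\omega)\|_{L^\infty(D)},\|\sigma(\omega)^{-1}\|_{L^\infty(D)}\bigr\},
\]
which uses both hypotheses $\sigma,\sigma^{-1}\in L^1(\Omega;L^\infty(D))$. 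Your route is more economical: sandwiching $\sigma(\omega)$ from below by the constant $\essinf\sigma(\omega)$ and using only the one-sided consequence \req{mono_direct} together with $\Lambda(c)=c^{-1}\Lambda(1)$ gives the sharper bound $\|\Lambda(\sigma(\omega))\|\le\|\Lambda(1)\|\,\|\sigma(\omega)^{-1}\|_{L^\infty(D)}$, so integrability of $\Lambda(\sigma)$ follows from the single hypothesis $\sigma^{-1}\in L^1(\Omega;L^\infty(D))$. You are also more explicit than the paper about why $\E(\sigma)$ and $\E(\sigma^{-1})$ have positive essential infima (the paper asserts this ``follows directly from the assumption'' without spelling out the Jensen step you supply), and about the closed-subspace argument for compactness and self-adjointness (the paper simply says these ``follow as in the deterministic case'').
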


\begin{proof}
The fact that $\E(\sigma)$ and $\E(\sigma^{-1})$ are well defined and belong to  $L^\infty_+(D)$ follows directly from the assumption $\sigma,\sigma^{-1}\in L^1(\Omega;L^\infty_+(D))$. Furthermore,  $\Lambda(\sigma):\ \Omega\to \LL(\Ld^2(\partial D))$ 
is measurable since it is a concatenation of a measurable and a continuous function. 
Since continuous images of separable sets are separable, the same argument shows that $\Lambda(\sigma)$ is also strongly measurable. 

To show that $\Lambda(\sigma)$ has a finite expectation, we fix
the constant deterministic conductivity $\sigma_0 \equiv 1\in L^\infty_+(\Omega)$, 
and employ the monotonicity lemma \ref{lemma:IKSS} and \req{bilinear_form}. It follows that for almost all $\omega\in \Omega$,
\begin{align*}
\frac{\norm{\Lambda(\sigma(\omega))}_{\LL(\Ld^2(\partial D))}}{\norm{\Lambda(1)}_{\LL(\Ld^2(\partial D))}}
& \leq 
1+ \frac{\norm{\Lambda(\sigma(\omega))-\Lambda(1)}_{\LL(\Ld^2(\partial D))}}{\norm{\Lambda(1)}_{\LL(\Ld^2(\partial D))}}\\[1mm]
& = 
1+ \frac{\sup_{\norm{g}_{\Ld^2(\partial D)}=1} \left| \int_{\partial D} g \left(\Lambda(\sigma) - \Lambda(1)\right) g \dx[s] \right|}%
{\sup_{\norm{g}_{\Ld^2(\partial D)}=1} \left| \int_{\partial D} g  \Lambda(1) g \dx[s] \right|} \\[1mm]
& \leq  1+ \max\left\{ \Norm{1-\sigma(\omega)}_{L^\infty(D)}, 
\Norm{ 1- \sigma(\omega)^{-1}}_{L^\infty(D)}
\right\}\\[1mm]
&\leq 2+\max \left\{ \Norm{\sigma(\omega)}_{L^\infty(D)},
\Norm{\sigma(\omega)^{-1}}_{L^\infty(D)}
\right\}.
\end{align*}
Since $\sigma,\sigma^{-1}\in L^1(\Omega;L^\infty_+(D))$, this implies that $\Lambda(\sigma)\in L^1(\Omega;\LL(\Ld^2(\partial D)))$. 

The self-adjointness and compactness of $\E(\Lambda(\sigma))$ follow as in the deterministic case.
\end{proof}

Our stochastic framework covers an anomaly of deterministic size with a random conductivity value but not an anomaly with random size. We will demonstrate this in the following example.
\begin{example}\label{ex:example1}
We consider a domain $D$ with homogeneous unit background conductivity and an embedded anomaly.
\begin{enumerate}[(a)]
\item First assume that the anomaly consists of finitely many, mutually disjoint pixels, where the conductivity has a constant, uniformly distributed, stochastic value. To be more precise,
\[
\sigma(\omega)= 1 + \sum_{q=1}^Q \theta_q(\omega) \chi_q, \qquad \omega\in \Omega,
\]
where $\chi_q$ is the characteristic function of the $q$th pixel
and 
\[
\theta_q:  \Omega \to [a_q,b_q] \subset (-1, \infty), \qquad q = 1, \dots, Q, 
\]
are mutually independent, uniformly distributed random variables. Hence, each $\theta_q$ has a probability density function given by $(b_q-a_q)^{-1}\chi_{[a_q,b_q]}$.
We then have, using the linearity of the expectation operator and the disjointness of the pixels
\begin{align*}
\E(\sigma)&= 1 + \sum_{q=1}^Q \E(\theta_q) \chi_q = 1 + \sum_{q=1}^Q \int_\Omega\theta_q(\omega)\,{\rm d}P \;\chi_q\\
& = 1 + \sum_{q=1}^Q \frac1{b_q-a_q}\int_{a_q}^{b_q}x\,{\rm d} x \;\chi_q = 1 + \sum_{q=1}^Q \frac{a_q+b_q}{2} \chi_q,\\[1mm]
\E(\sigma^{-1})& = 
\left\{ \begin{array}{l l}
\E\left( \frac{1}{1+\theta_q \chi_q} \right) = (b_q-a_q)^{-1}\log\frac{1+b_q}{1+a_q} & \text{ in the $q$th pixel,}\\[2mm]
1 & \text{ in the background.}
\end{array}
\right.
\end{align*}
\item Now assume that the domain $D=B_1(0)\subseteq \R^2$ is the unit disk
that contains a circular anomaly with stochastic radius, say,
\[
\sigma(\omega)=1 + \chi_{\theta(\omega)}, \qquad \omega\in \Omega,
\]
where $\theta: \Omega \to [0,1/2]$ is uniformly distributed and $\chi_{\theta}$ is the characteristic function of a disk with radius $\theta$ centered at the origin. Then for $\omega_1, \omega_2 \in \Omega$ such that $\theta(\omega_1) \not = \theta(\omega_2)$, it holds that
\[
\norm{\sigma(\omega_1)- \sigma(\omega_2)}_{L^\infty(D)} = \norm{\chi_{\theta(\omega_1)}-\chi_{\theta(\omega_2)}}_{L^\infty(D)}=1,
\]
which shows that $\sigma:\ \Omega \to L^\infty_+(D)$ cannot be essentially separably valued.
\end{enumerate}
\end{example}

By replacing $\sigma_1$ in Lemma \ref{lemma:IKSS}  by a random conductivity $\sigma$ and taking expectations, we immediately obtain the following stochastic monotonicity estimate.
\begin{corollary}\label{cor:monotonicity}
Consider a deterministic conductivity $\sigma_0$ and a stochastic conductivity $\sigma$, i.e., 
$\sigma_0\in L^\infty_+(D)$ and $\sigma,\sigma^{-1}\in L^1(\Omega;L^\infty_+(D))$. Then, for all $g\in \Ld^2(\partial D)$,
\begin{align*}
\int_D(\E(\sigma)-\sigma_0)) |\nabla u_0|^2 \dx
 & \geq \int_{\partial D} g \big(\Lambda(\sigma_0)-\E(\Lambda(\sigma))\big) g \dx[s]\\
& \geq \int_D \sigma_0^2 \left( \sigma_0^{-1}-\E(\sigma^{-1}) \right)|\nabla u_0|^2\dx,
\end{align*}
where $u_0:=u^g_{\sigma_0}\in H^1_\diamond(D)$ solves \req{EIT}.
In particular, by choosing in turns $\sigma_0 =\E(\sigma)$ and $\sigma_0 =\E(\sigma^{-1})^{-1}$, it follows  that
\begin{align}\labeq{mono_direct_E}
\Lambda(\E(\sigma))\leq \E(\Lambda(\sigma)))\leq \Lambda \big(\E(\sigma^{-1})^{-1}\big).
\end{align}
\end{corollary}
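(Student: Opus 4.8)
The plan is to view the asserted chain of inequalities as the expectation of the deterministic monotonicity estimate of Lemma~\ref{lemma:IKSS} applied pointwise in $\omega$. The structural observation that makes this work is that $u_0 = u^g_{\sigma_0}$ solves \req{EIT} with the \emph{deterministic} coefficient $\sigma_0$ and is therefore independent of $\omega$; this is precisely why the factor $|\nabla u_0|^2$ may be pulled out of the expectation in the two outer terms.

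First I would fix $g\in\Ld^2(\partial D)$ and the associated deterministic potential $u_0$. For $P$-almost every $\omega\in\Omega$ the conductivity $\sigma(\omega)$ lies in $L^\infty_+(D)$, so Lemma~\ref{lemma:IKSS} applied with the pair $\sigma_0$ and $\sigma_1=\sigma(\omega)$ gives, almost surely,
\begin{align*}
\int_D(\sigma(\omega)-\sigma_0)|\nabla u_0|^2\dx
&\geq \int_{\partial D} g\big(\Lambda(\sigma_0)-\Lambda(\sigma(\omega))\big)g\dx[s]\\
&\geq \int_D \sigma_0^2\big(\sigma_0^{-1}-\sigma(\omega)^{-1}\big)|\nabla u_0|^2\dx.
\end{align*}

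Next I integrate this chain over $\Omega$ with respect to $P$; since an almost-sure inequality between $P$-integrable functions is preserved under integration, the ordering survives and it remains only to identify the three expectations. For the two outer terms I interchange the $\Omega$- and $D$-integrations by Fubini's theorem, which is legitimate because $|\nabla u_0|^2\in L^1(D)$ (as $u_0\in H^1(D)$) together with $\sigma,\sigma^{-1}\in L^1(\Omega;L^\infty(D))$ yields absolute integrability, e.g. $\int_\Omega\int_D|\sigma(\omega)-\sigma_0|\,|\nabla u_0|^2\dx\dx[P]\leq\big(\Norm{\sigma}_{L^1(\Omega;L^\infty(D))}+\Norm{\sigma_0}_{L^\infty(D)}\big)\Norm{\nabla u_0}^2_{L^2(D)}<\infty$. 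After the interchange the outer terms become $\int_D(\E(\sigma)-\sigma_0)|\nabla u_0|^2\dx$ and $\int_D\sigma_0^2(\sigma_0^{-1}-\E(\sigma^{-1}))|\nabla u_0|^2\dx$. For the middle term I use that the continuous linear functional $A\mapsto\int_{\partial D} g A g\dx[s]$ on $\LL(\Ld^2(\partial D))$ commutes with the Bochner integral of $\Lambda(\sigma)\in L^1(\Omega;\LL(\Ld^2(\partial D)))$ --- the latter membership being exactly the content of the lemma preceding this corollary --- so that it becomes $\int_{\partial D} g(\Lambda(\sigma_0)-\E(\Lambda(\sigma)))g\dx[s]$. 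This establishes the displayed double inequality.

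Finally, the operator estimate \req{mono_direct_E} follows by specialization. Taking $\sigma_0=\E(\sigma)$ makes the leftmost integral vanish, so the left inequality reads $0\geq\int_{\partial D} g(\Lambda(\E(\sigma))-\E(\Lambda(\sigma)))g\dx[s]$ for every $g$, i.e. $\Lambda(\E(\sigma))\leq\E(\Lambda(\sigma))$; taking $\sigma_0=\E(\sigma^{-1})^{-1}$ (so that $\sigma_0^{-1}=\E(\sigma^{-1})$) makes the rightmost integral vanish, whence $\int_{\partial D} g(\Lambda(\E(\sigma^{-1})^{-1})-\E(\Lambda(\sigma)))g\dx[s]\geq 0$ for every $g$, i.e. $\E(\Lambda(\sigma))\leq\Lambda(\E(\sigma^{-1})^{-1})$. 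I expect the only genuine technical points to be the two interchange-of-integration steps; both rest on the integrability supplied by the preceding lemma, while the sign bookkeeping in the specialization is routine.
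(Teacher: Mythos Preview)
Your proof is correct and follows exactly the approach the paper takes: the paper simply remarks that the corollary is obtained ``by replacing $\sigma_1$ in Lemma~\ref{lemma:IKSS} by a random conductivity $\sigma$ and taking expectations,'' and you have faithfully carried out precisely this program, supplying the Fubini and Bochner-integral justifications that the paper leaves implicit.
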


\section{Detecting stochastic inclusions}
\label{Sec:detection1}

We consider a domain $D$ that has known deterministic background properties but contains an anomaly $A$ with random conductivity. For simplicity, we derive our results only for the case of a homogeneous unit background conductivity and assume that
$A$ is an open set for which $\overline A\subset D$ has a connected complement. Following the line of reasoning in \cite{harrach2013recent,harrach2013monotonicity}, it should be straightforward to 
generalize the results for (known and deterministic) piecewise analytic background conductivities and a more general subset $A$. In the rest of this section, $u_0:=u^g_{\sigma_0}\in H^1_\diamond(D)$ solves \req{EIT} for the considered background conductivity $\sigma_0 \equiv 1$ and a current density $g \in L^2_{\diamond}(\partial D)$ that should be clear from the context.

In the deterministic case, rigorous noniterative inclusion detection methods such as the Factorization Method or the Monotonicity Method
are known to correctly recover the shape of an unknown anomaly whenever there exists $\alpha>0$ such that the conductivity in the anomaly is either
\[
\text{(a) larger than $1+\alpha$,} \quad \text{ or } \quad \text{(b) smaller than $1-\alpha$.}
\]
As noted above, here we consider a domain $D$ containing a stochastic anomaly $A$, that is, 
\[
\sigma(\omega,x):=\left\{ \begin{array}{l l } 1 & \quad \text{ for } x\in D\setminus A,\\
\sigma_A(\omega,x) & \quad \text{ for } x\in A,\end{array}\right.
\]
where $\sigma_A:\ \Omega\to L^\infty_+(A)$ is a random field with $\sigma_A,\sigma_A^{-1}\in L^1(\Omega;L^\infty_+(A))$. It is tempting to try to detect $A$ from the stochastic boundary measurements by applying the deterministic Factorization or Monotonicity Method to the expected value $\E(\Lambda(\sigma))$ of the stochastic Neumann-to-Dirichlet boundary operator. We will show that these methods indeed correctly recover the anomaly $A$ if there exists $\alpha>0$ such that, almost everywhere in $A$, either
\begin{equation}
\label{stoch_cond}
\text{(a) $\E(\sigma_A^{-1})^{-1}  \geq 1+ \alpha$,} \quad \text{ or } \quad \text{(b) $\E(\sigma_A)  \leq 1 -\alpha$.}
\end{equation}
Note that, by Jensen's inequality, $\E(\sigma_A)\geq \E(\sigma_A^{-1})^{-1}$, meaning that (a) also implies that $\E(\sigma_A) \geq 1+ \alpha$. Analogously, (b) implies that $\E(\sigma_A^{-1})^{-1}\leq 1-\alpha$.

We start with the Factorization Method. 

\begin{theorem}[Factorization Method]
\label{thm:factorization}
Assume that there exists $\alpha>0$ such that one of the two conditions in
\eqref{stoch_cond} is valid.
Then for any nonvanishing dipole moment $d\in \R^n$ and every point $z\in D\setminus \partial A$, it holds that
\[
z\in A\quad \text{ if and only if } \quad \Phi_{z,d}|_{\partial D}\in \range \left( \left| \E(\Lambda(\sigma))- \Lambda(1) \right|^{1/2} \right),
\]
where $\Phi_{z,d} \in C^{\infty}(\overline{D} \setminus \{z\}) \cap H^{-n/2+1-\varepsilon}(D)$, $\varepsilon > 0$, is the unique solution of 
\[
\Delta \Phi_{z,d}=d\cdot \nabla \delta_z \quad \text{in } D, \qquad \partial_\nu \Phi_{z,d} = 0 \quad \text{on } \partial D, \qquad \int_{\partial D} \Phi_{z,d}\dx[s]  = 0,
\]
i.e., the potential created by a dipole at $z$ corresponding to the moment $d$, the unit conductivity and vanishing current density on $\partial D$.
\end{theorem}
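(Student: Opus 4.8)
The plan is to reduce the stochastic claim to two \emph{deterministic} Factorization Method problems by sandwiching $\E(\Lambda(\sigma))$ between the Neumann-to-Dirichlet maps of two deterministic conductivities, as provided by \req{mono_direct_E} in Corollary~\ref{cor:monotonicity}, and then to transport operator inequalities to inclusions of ranges of square roots via the standard range-inclusion criterion: for bounded positive self-adjoint operators on $\Ld^2(\partial D)$, $0\leq B_1\leq B_2$ implies $\range(B_1^{1/2})\subseteq\range(B_2^{1/2})$ (the functional-analytic lemma underpinning the Factorization Method; cf.~\cite{Bruhl01,Hanke11}). The two alternatives in \eqref{stoch_cond} are symmetric, so I would treat case~(a) in full and only note the sign changes for case~(b).

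For case~(a) I would introduce the deterministic bracketing conductivities $\sigma_-:=\E(\sigma^{-1})^{-1}$ and $\sigma_+:=\E(\sigma)$. By Jensen's inequality $\sigma_-\leq\sigma_+$, and by hypothesis~(a) both equal $1$ on $D\setminus A$ and are $\geq 1+\alpha$ on $A$; hence $\sigma_\pm$ are genuine \emph{definite} conductive inclusions supported exactly on $A$. Writing $M:=\Lambda(1)-\E(\Lambda(\sigma))$, I would first check definiteness: since $\sigma_-\geq 1$, \req{mono_direct} gives $\Lambda(\sigma_-)\leq\Lambda(1)$, and with the upper estimate in \req{mono_direct_E} this yields $\E(\Lambda(\sigma))\leq\Lambda(\sigma_-)\leq\Lambda(1)$, so $M\geq 0$ and $|\E(\Lambda(\sigma))-\Lambda(1)|^{1/2}=M^{1/2}$. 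This is exactly where assumption~(a) is used: it makes the operator semidefinite and removes any indefinite absolute value. Subtracting \req{mono_direct_E} from $\Lambda(1)$ then gives the chain
\[
0\leq B_-:=\Lambda(1)-\Lambda(\sigma_-)\leq M\leq\Lambda(1)-\Lambda(\sigma_+)=:B_+ ,
\]
with $B_\pm=|\Lambda(\sigma_\pm)-\Lambda(1)|$ both positive.

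The range-inclusion criterion applied to this chain gives $\range(B_-^{1/2})\subseteq\range(M^{1/2})\subseteq\range(B_+^{1/2})$. I would then invoke the deterministic Factorization Method (e.g.~\cite{Bruhl01,Hanke11,harrach2013recent}) separately for $\sigma_-$ and $\sigma_+$: each is a definite conductive inclusion with support $A$, so for every $z\in D\setminus\partial A$,
\[
z\in A \iff \Phi_{z,d}|_{\partial D}\in\range\big(|\Lambda(\sigma_\pm)-\Lambda(1)|^{1/2}\big)=\range(B_\pm^{1/2}).
\]
The claimed equivalence for $M^{1/2}$ then follows by combining the two directions with the range sandwich: if $z\in A$, the $\sigma_-$-characterization gives $\Phi_{z,d}|_{\partial D}\in\range(B_-^{1/2})\subseteq\range(M^{1/2})$; if instead $z\in D\setminus\overline A$, the $\sigma_+$-characterization gives $\Phi_{z,d}|_{\partial D}\notin\range(B_+^{1/2})\supseteq\range(M^{1/2})$. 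Since $M^{1/2}=|\E(\Lambda(\sigma))-\Lambda(1)|^{1/2}$, this is the assertion of the theorem.

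Case~(b) is the mirror image: by Jensen $\sigma_-\leq\sigma_+\leq 1-\alpha$ on $A$, so $\sigma_\pm$ are definite \emph{resistive} inclusions on $A$; \req{mono_direct_E} now renders $M:=\E(\Lambda(\sigma))-\Lambda(1)$ positive, and the chain $0\leq\Lambda(\sigma_+)-\Lambda(1)\leq M\leq\Lambda(\sigma_-)-\Lambda(1)$ reduces everything to the deterministic Factorization Method for the two resistive bracketing conductivities, with the two range directions combined exactly as before. I expect the genuine work to lie not in this reduction but in its two pillars: a clean statement and application of the range-inclusion criterion, and the precise deterministic Factorization Method characterization with dipole sources $\Phi_{z,d}$ for the definite bracketing conductivities. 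Establishing the definiteness of $M$ from \req{mono_direct_E} — which is what allows the single signed operator to replace its absolute value — is the step I would be most careful about.
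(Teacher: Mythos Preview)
Your argument is correct. It rests on the same monotonicity input (Corollary~\ref{cor:monotonicity}) as the paper's proof, but the reduction is organized differently. The paper applies the quadratic-form inequalities of Corollary~\ref{cor:monotonicity} directly to obtain, in case~(a), the two-sided energy bound
\[
\frac{\alpha}{1+\alpha}\int_A|\nabla u_0|^2\dx\ \leq\ \int_{\partial D}g\big(\Lambda(1)-\E(\Lambda(\sigma))\big)g\dx[s]\ \leq\ \|\E(\sigma_A)\|_{L^\infty(A)}\int_A|\nabla u_0|^2\dx
\]
(and analogously in case~(b)), and then invokes the deterministic Factorization machinery of \cite[Theorem~5]{harrach2013recent} once for the abstract operator $\Lambda(1)-\E(\Lambda(\sigma))$. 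You instead use only the operator sandwich \req{mono_direct_E}, apply the full deterministic Factorization Method \emph{twice} (to the bracketing conductivities $\sigma_\pm=\E(\sigma^{-1})^{-1}$ and $\E(\sigma)$), and transfer the two range characterizations to $M$ via the range-inclusion lemma $0\leq B_1\leq B_2\Rightarrow\range(B_1^{1/2})\subseteq\range(B_2^{1/2})$. Your route is more modular---two black-box calls to the deterministic result and no explicit energy bounds---while the paper's is slightly more self-contained and makes the constants visible; under the hood both rely on exactly the same range-inclusion criterion and the same virtual-measurement operator $g\mapsto\nabla u_0|_A$, so neither requires anything beyond Section~\ref{Sec:setting}.
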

\begin{proof}
In our current setting, the monotonicity result of Corollary~\ref{cor:monotonicity} reads
\begin{align*}
\int_A(\E(\sigma_A)-1)) |\nabla u_0|^2 \dx
 & \geq \int_{\partial D} g \big(\Lambda(1)-\E(\Lambda(\sigma))\big) g \dx[s]\\
 & \geq \int_A  \left( 1-\E(\sigma_A^{-1}) \right)|\nabla u_0|^2\dx
\end{align*}
for all $g\in \Ld^2(\partial D)$. 
In case (a), we have $\E(\sigma_A^{-1})\leq (1+\alpha)^{-1}$, and thus for all $g\in \Ld^2(\partial D)$,
\begin{align*}
\| \E(\sigma_A) \|_{L^\infty(A)} \int_A |\nabla u_0|^2 \dx
 & \geq \int_{\partial D} g \big(\Lambda(1)-\E(\Lambda(\sigma))\big) g \dx[s]\\
 & \geq  \frac{\alpha}{1+\alpha} \int_A  |\nabla u_0|^2\dx.
\end{align*}
In case (b), we use $\E(\sigma_A) \leq 1- \alpha$ to obtain
\begin{align*}
\alpha \int_A |\nabla u_0|^2 \dx
 & \leq \int_{\partial D} g \big(\E(\Lambda(\sigma)) -\Lambda(1)\big) g \dx[s]
 \leq  \big\| \E(\sigma_A^{-1}) \big \|_{L^\infty(A)} \int_A |\nabla u_0|^2 \dx
\end{align*}
for all $g\in \Ld^2(\partial D)$.
The assertion now follows from the previous two estimates by the same arguments as in the proof of the deterministic Factorization Method in \cite[Theorem~5]{harrach2013recent}.
\end{proof}

Then it is the turn of the Monotonicity Method.

\begin{theorem}[Monotonicity Method]\label{thm:MonotonicityMethod}
\begin{enumerate}[(a)]
\item For every constant $\alpha>0$ such that $\E(\sigma_A^{-1})^{-1} \geq 1+\alpha$ almost everywhere on $A$, and for every open ball $B\subset D$,
\[
B\subseteq A \quad \text{ if and only if } \quad \Lambda(1+\alpha \chi_B)\geq \E(\Lambda(\sigma_A)).
\]
\item For every $0 < \alpha < 1$ such that $\E(\sigma_A) \leq 1-\alpha$ almost everywhere on $A$, and for every open ball $B\subset D$,
\[
B\subseteq A \quad \text{ if and only if } \quad \Lambda(1-\alpha \chi_B)\leq \E(\Lambda(\sigma_A)).
\]
\end{enumerate}
\end{theorem}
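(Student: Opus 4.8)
The plan is to prove each of the two equivalences by establishing its two implications separately, following the scheme of the deterministic Monotonicity Method in \cite{harrach2013monotonicity}. In both cases the forward implication ($B\subseteq A$ implies the operator inequality) will follow purely from the monotonicity relations already available, namely the deterministic monotonicity \req{mono_direct} combined with the stochastic bracketing \req{mono_direct_E}. The converse implication I would prove by contraposition: assuming $B\not\subseteq A$, I would exhibit a sequence of boundary currents $(g_k)$ for which the relevant quadratic form has the wrong sign for large $k$, thereby ruling out the operator inequality. The whole point is that Corollary~\ref{cor:monotonicity} lets every deterministic step go through verbatim after $\Lambda(\sigma)$ is replaced by its expectation.

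For the forward direction of (a), suppose $B\subseteq A$. Since $\E(\sigma_A^{-1})^{-1}\geq 1+\alpha$ a.e.\ on $A$ and the background is unit, the full conductivity satisfies $\E(\sigma^{-1})^{-1}\geq 1+\alpha\chi_B$ pointwise in $D$. Applying deterministic monotonicity \req{mono_direct} and then the right-hand bracket of \req{mono_direct_E} yields
\[
\E(\Lambda(\sigma))\leq \Lambda\!\big(\E(\sigma^{-1})^{-1}\big)\leq \Lambda(1+\alpha\chi_B),
\]
which is the claimed inequality. The forward direction of (b) is the mirror image: from $\E(\sigma_A)\leq 1-\alpha$ one gets $\E(\sigma)\leq 1-\alpha\chi_B$, and combining \req{mono_direct} with the left-hand bracket of \req{mono_direct_E} gives $\Lambda(1-\alpha\chi_B)\leq \Lambda(\E(\sigma))\leq \E(\Lambda(\sigma))$.

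For the converse I would argue by contraposition. If $B\not\subseteq A$, then $B\setminus\overline A$ is a nonempty open set, so it contains a small ball $W$ with $\overline W\cap\overline A=\emptyset$; because $\overline A$ has connected complement, $W$ is reachable from $\partial D$ within $D\setminus\overline A$. I would then split the operator difference, in case (a), as
\[
\Lambda(1+\alpha\chi_B)-\E(\Lambda(\sigma))=\big(\Lambda(1+\alpha\chi_B)-\Lambda(1)\big)+\big(\Lambda(1)-\E(\Lambda(\sigma))\big),
\]
bound the first bracket above in the sense of quadratic forms by $-\tfrac{\alpha}{1+\alpha}\int_B|\nabla u_0|^2\dx$ via Lemma~\ref{lemma:IKSS} with $\sigma_0\equiv 1$, and bound the second bracket above by $(\|\E(\sigma_A)\|_{L^\infty(A)}-1)\int_A|\nabla u_0|^2\dx$ via Corollary~\ref{cor:monotonicity}. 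The key analytic tool is the \emph{localized potentials} result of Gebauer \cite{gebauer2008localized}: it furnishes a sequence $(g_k)$ with $\int_W|\nabla u_0^{g_k}|^2\dx\to\infty$ while $\int_A|\nabla u_0^{g_k}|^2\dx\to 0$. Since $W\subseteq B$, the negative term dominates and the quadratic form $\int_{\partial D}g_k(\Lambda(1+\alpha\chi_B)-\E(\Lambda(\sigma)))g_k\dx[s]$ tends to $-\infty$, contradicting $\Lambda(1+\alpha\chi_B)\geq\E(\Lambda(\sigma))$. Case (b) is handled symmetrically: the bracket $\Lambda(1-\alpha\chi_B)-\Lambda(1)$ is bounded below by $\alpha\int_B|\nabla u_0|^2\dx$, the stochastic bracket is controlled below using the $L^\infty(A)$-boundedness of $\E(\sigma_A^{-1})$, and the same localized potentials drive the form to $+\infty$.

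The main obstacle is this converse direction, and within it the applicability of the localized potentials lemma: one must concentrate gradient energy in $W\subset B\setminus\overline A$ while simultaneously making it negligible on $A$. This is exactly where the standing geometric hypothesis that $\overline A$ has connected complement enters, guaranteeing that $W$ is electrically reachable from the boundary without crossing $A$. Once the localized potentials are in hand, the $L^\infty(A)$-boundedness of $\E(\sigma_A)$ and $\E(\sigma_A^{-1})$ (which holds under the standing integrability assumptions $\sigma_A,\sigma_A^{-1}\in L^1(\Omega;L^\infty_+(A))$) suffices to keep the $A$-terms bounded, and the remaining steps are the routine sign bookkeeping indicated above.
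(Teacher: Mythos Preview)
Your proposal is correct and follows essentially the same route as the paper's proof: monotonicity together with the bracketing \req{mono_direct_E} for the forward implications, and monotonicity bounds combined with localized potentials for the contrapositives. The only cosmetic difference is that you split the operator difference through $\Lambda(1)$ (so every $u_0$ is the unit-conductivity solution), whereas the paper first shrinks $B$ into $D\setminus\overline A$ by monotonicity and then applies Corollary~\ref{cor:monotonicity} directly with $\sigma_0=1\pm\alpha\chi_B$; both arrangements yield the same localized-potentials conclusion.
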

\begin{proof}
We prove the two cases separately.
\begin{enumerate}[(a)]
\item 
If $B\subseteq A$, then $1+\alpha \chi_B\leq \E(\sigma^{-1})^{-1}$, and we obtain from
\req{mono_direct} and \req{mono_direct_E} that
\[
\Lambda(1+\alpha \chi_B) \geq \Lambda(\E(\sigma^{-1})^{-1}) \geq \E(\Lambda(\sigma)).
\]

Now, let $B\not\subseteq A$. We will show that $\Lambda(1+\alpha \chi_B)\not\geq \E(\Lambda(\sigma))$.
Since \req{mono_direct} proves that shrinking $B$ enlarges $\Lambda(1+ \alpha \chi_B)$,
we may assume without loss of generality  that $B\subseteq D\setminus \overline A$. Using the
monotonicity result in Corollary~\ref{cor:monotonicity}, we obtain that
\begin{align*}
\lefteqn{\int_D g \big(\Lambda(1+\alpha \chi_B) - \E(\Lambda(\sigma))\big) g \dx}\\
& \leq \int_D \big( \E(\sigma)- (1+\alpha \chi_B)\big) |\nabla u_0|^2\dx\\
&  = \int_A \left( \E(\sigma_A) - 1 \right) |\nabla u_0|^2 \dx - \alpha \int_B |\nabla u_0|^2 \dx
\end{align*}
for all $g\in \Ld^2(\partial D)$.
Using a localized potential with large energy on $B$ and small energy on $A$ 
(see \cite{gebauer2008localized, harrach2013monotonicity}), it
follows that there exists $g\in \Ld^2(\partial D)$ satisfying
\[
\int_D g \big(\Lambda(1+\alpha \chi_B) - \E(\Lambda(\sigma))\big) g \dx < 0,
\]
so that $\Lambda(1+\alpha \chi_B) - \E(\Lambda(\sigma))\not\geq 0$.

\item If $B\subseteq A$, then $1-\alpha \chi_B\geq  \E(\sigma)$, and we obtain from
\req{mono_direct} and \req{mono_direct_E} that
\[
\Lambda(1-\alpha \chi_B) \leq \Lambda(\E(\sigma)) \leq \E(\Lambda(\sigma)).
\]

Now, let $B\not\subseteq A$. To show that $\Lambda(1-\alpha \chi_B)\not\leq \E(\Lambda(\sigma))$,
we can again assume without loss of generality that $B\subseteq D\setminus \overline A$. Employing Corollary~\ref{cor:monotonicity} with $\sigma_0 = 1+\alpha \chi_B$, we obtain this time around that 
\begin{align*}
\int_D g \big(\Lambda(1 - \alpha \chi_B) &- \E(\Lambda(\sigma))\big) g \dx \\[1mm]
& \geq \int_D (1-\alpha\chi_B)^2 \left( (1-\alpha\chi_B)^{-1} - \E(\sigma^{-1}) \right) |\nabla u_0|^2\dx\\[1mm]
& = \int_A   \left( 1 - \E(\sigma_A^{-1}) \right) |\nabla u_0|^2\dx  
+  \alpha(1-\alpha) \int_B  |\nabla u_0|^2\dx
\end{align*}
for all $g\in \Ld^2(\partial D)$.
As in case (a), we can use a localized potential with large energy on $B$ and small energy on $A$ 
to conclude that there exists $g\in \Ld^2(\partial D)$ satisfying
\[
\int_D g \big(\Lambda(1-\alpha \chi_B) - \E(\Lambda(\sigma))\big) g \dx > 0,
\]
so that $\Lambda(1-\alpha \chi_B) - \E(\Lambda(\sigma))\not\leq 0$.
\end{enumerate}
This completes the proof.
\end{proof}

Theorem~\ref{thm:MonotonicityMethod} provides an explicit characterization of the anomaly: If a suitable threshold value $\alpha > 0$ is available, one can test whether any given open ball is enclosed by $A$ (assuming infinite measurement/numerical precision).
The following theorem is essential in the numerical implementation of the Monotonicity Method. It demonstrates that, as in the deterministic case, the monotonicity test can be performed using the linearized Neumann-to-Dirichlet map, which considerably reduces the computational cost as one does not need to simulate a new nonlinear Neumann-to-Dirichlet operator for testing each new ball and threshold value.
To get set, we define $\Lambda'(1)\chi_B\in \LL(\Ld^2(\partial D))$ for a ball $B\subseteq D$ via its quadratic form
\[
\int_{\partial D} g \, \Lambda'(1)\chi_B g \dx[s]:=- \int_B |\nabla u_0|^2 \dx \qquad {\rm for} \ {\rm all} \ g \in L^2_\diamond(\partial D),
\]
where once again $u_0 \in H^1_\diamond(D)$ is the reference potential corresponding to the unit background conductivity and the current density $g$.

\begin{theorem}[Linearized Monotonicity Method]
\begin{enumerate}[(a)]
\item If there exists $\alpha>0$ such that  $\E(\sigma_A^{-1})^{-1} \geq 1+\alpha$ almost everywhere on $A$, then for every open ball $B\subset D$ and for every $0<\beta\leq \frac{\alpha}{1+\alpha}$,
\[
B\subseteq A \quad \text{ if and only if } \quad \Lambda(1)+\beta \Lambda'(1)\chi_B \geq \E(\Lambda(\sigma)).
\]
\item If there exists $\alpha>0$ such that  $\E(\sigma_A) \leq 1-\alpha$ almost everywhere on $A$, then for every open ball $B\subset D$ and for every $0<\beta<\alpha$,
\[
B\subseteq A \quad \text{ if and only if } \quad \Lambda(1)-\beta \Lambda'(1)\chi_B\leq \E(\Lambda(\sigma)).
\]
\end{enumerate}
\end{theorem}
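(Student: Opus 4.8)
The plan is to show that the cheap linearized test operators $\Lambda(1)\pm\beta\Lambda'(1)\chi_B$ are sandwiched between $\Lambda(1)$ and the genuinely nonlinear test operators $\Lambda(1\pm\alpha\chi_B)$ of Theorem~\ref{thm:MonotonicityMethod}, so that the characterization of that theorem transfers almost verbatim. The crucial first step is a pair of \emph{linearization inequalities},
\[
\Lambda(1)+\beta\Lambda'(1)\chi_B \geq \Lambda(1+\alpha\chi_B), \qquad 0<\beta\leq \tfrac{\alpha}{1+\alpha},
\]
in case~(a), and
\[
\Lambda(1)-\beta\Lambda'(1)\chi_B \leq \Lambda(1-\alpha\chi_B), \qquad 0<\beta<\alpha,
\]
in case~(b). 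To prove these I would apply Lemma~\ref{lemma:IKSS} with $\sigma_0\equiv 1$ and $\sigma_1=1\pm\alpha\chi_B$. In case~(a) the lower bound of the lemma gives $\int_{\partial D} g\big(\Lambda(1)-\Lambda(1+\alpha\chi_B)\big)g\dx[s]\geq \tfrac{\alpha}{1+\alpha}\int_B|\nabla u_0|^2\dx$, while in case~(b) the upper bound gives $\int_{\partial D} g\big(\Lambda(1)-\Lambda(1-\alpha\chi_B)\big)g\dx[s]\leq -\alpha\int_B|\nabla u_0|^2\dx$. Recalling that $\int_{\partial D} g\,\Lambda'(1)\chi_B g\dx[s]=-\int_B|\nabla u_0|^2\dx$ and keeping track of the sign of this nonpositive quadratic form, the stated inequalities follow for the indicated ranges of $\beta$; the threshold $\alpha/(1+\alpha)$ (resp.\ $\alpha$) is exactly the constant produced by the reciprocal term $\sigma_0^2(\sigma_0^{-1}-\sigma_1^{-1})$ (resp.\ the direct term $\sigma_1-\sigma_0$) in Lemma~\ref{lemma:IKSS}.

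With the sandwich in hand, the forward implications are immediate. If $B\subseteq A$, then Theorem~\ref{thm:MonotonicityMethod}(a) yields $\Lambda(1+\alpha\chi_B)\geq \E(\Lambda(\sigma))$, and chaining with the first linearization inequality gives $\Lambda(1)+\beta\Lambda'(1)\chi_B\geq \E(\Lambda(\sigma))$. Case~(b) is the mirror image: Theorem~\ref{thm:MonotonicityMethod}(b) gives $\Lambda(1-\alpha\chi_B)\leq \E(\Lambda(\sigma))$, and the second inequality yields $\Lambda(1)-\beta\Lambda'(1)\chi_B\leq\Lambda(1-\alpha\chi_B)\leq\E(\Lambda(\sigma))$.

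For the reverse implications I would mimic the proof of Theorem~\ref{thm:MonotonicityMethod}. Assume $B\not\subseteq A$. Since shrinking $B$ enlarges $\Lambda'(1)\chi_B$ in the sense of quadratic forms (the energy $\int_B|\nabla u_0|^2\dx$ can only decrease), one may first reduce to the case $B\subseteq D\setminus\overline A$, exactly as in the nonlinear argument. Expanding the test quadratic form and estimating $\int_{\partial D} g\big(\Lambda(1)-\E(\Lambda(\sigma))\big)g\dx[s]$ by Corollary~\ref{cor:monotonicity} with $\sigma_0\equiv1$, one is led in case~(a) to the upper bound $\|\E(\sigma_A)-1\|_{L^\infty(A)}\int_A|\nabla u_0|^2\dx-\beta\int_B|\nabla u_0|^2\dx$, and in case~(b) to the lower bound $-\|1-\E(\sigma_A^{-1})\|_{L^\infty(A)}\int_A|\nabla u_0|^2\dx+\beta\int_B|\nabla u_0|^2\dx$. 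Invoking a localized potential with large energy on $B$ and small energy on $A$ (\cite{gebauer2008localized,harrach2013monotonicity}) then produces a $g$ making the test quadratic form negative in case~(a) and positive in case~(b), whence $\Lambda(1)+\beta\Lambda'(1)\chi_B\not\geq\E(\Lambda(\sigma))$ and $\Lambda(1)-\beta\Lambda'(1)\chi_B\not\leq\E(\Lambda(\sigma))$, respectively.

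I expect the main obstacle to be the first step: pinning down the two linearization inequalities with the sharp thresholds for $\beta$. Everything else is a direct transcription of the nonlinear Monotonicity Method—the forward direction is a one-line chaining and the reverse direction reuses the same localized-potentials machinery—but the quantitative comparison between the linear operator $\Lambda'(1)\chi_B$ and the nonlinear $\Lambda(1\pm\alpha\chi_B)$ must be extracted carefully from both bounds of Lemma~\ref{lemma:IKSS}, watching the sign of $\Lambda'(1)\chi_B$ and the way the admissible range of $\beta$ depends on $\alpha$.
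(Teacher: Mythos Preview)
Your proposal is correct and follows essentially the same route as the paper. The paper cites the linearization inequalities $\Lambda(1)+\beta\Lambda'(1)\chi_B\geq\Lambda(1+\alpha\chi_B)$ and its counterpart directly from \cite[Theorem~4.3 and Remark~4.5]{harrach2013monotonicity}, whereas you rederive them from Lemma~\ref{lemma:IKSS}; since that is exactly how those results are proved in the cited reference, the argument is the same, and for the converse the paper merely says ``a similar combination of monotonicity and localized potentials arguments as above,'' which is precisely what you spell out.
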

\begin{proof}
\begin{enumerate}[(a)]
\item For every $\alpha>0$ and $0<\beta\leq \frac{\alpha}{1+\alpha}$, it follows from \cite[Theorem 4.3]{harrach2013monotonicity} that
\[
\Lambda(1)+\beta \Lambda'(1)\chi_B\geq \Lambda(1+\alpha\chi_B).
\]
Hence, theorem \ref{thm:MonotonicityMethod} shows that
\[
B\subseteq A \quad \text{ implies } \quad \Lambda(1)+\beta \Lambda'(1)\chi_B \geq \E(\Lambda(\sigma)).
\]
The converse results from a similar combination of monotonicity and localized potentials arguments as above.
\item The claim follows as in part (a) by using \cite[Remark 4.5]{harrach2013monotonicity}.
\end{enumerate}
\end{proof}

\begin{remark}
In the next section we will numerically test the methods on the setting described in example \ref{ex:example1}(a)
where the considered anomalies consist of a finite number of subregions, or pixels whose (constant) conductivity levels are 
mutually independent uniformly distributed random variables. However, it should be noted that our theoretic results hold for general conductivity distributions, as long as $\sigma,\sigma^{-1}\in L^1(\Omega;L^\infty_+(D)) \subseteq L^1(\Omega;L^\infty(D))$ and \eqref{stoch_cond} is fulfilled. In particular, this includes log-normal distributed conductivities.
\end{remark}

\section{Numerical examples}
\label{Sec:numerics}
This section presents the numerical examples on the detection of stochastic inclusions from the mean of the Neumann-to-Dirichlet operator. We concentrate on the Factorization Method since it has thus far been more widely studied within the inverse problems community, making the comparison of our results to those achievable in the deterministic framework more straightforward. However, there is no reason to expect that the Monotonicity Method would perform any worse. 

\subsection{Simulation of data}
Let us first discuss how the expectation $\E(\Lambda(\sigma))$ of the Neumann-to-Dirichlet operator can be approximated for the numerical experiments that follow.
The idea is to compute the expected boundary voltages $\E(u_\sigma^g|_{\partial D}) = \E(u_\sigma^g)|_{\partial D}$ separately for a (finite) number of boundary currents $g$.

Once the distribution of the conductivity is given, the expectations $\E(u_\sigma^g)$ can be obtained by Monte Carlo simulation, i.e., by sampling from the distribution and solving a deterministic problem \eqref{eq:EIT} for each realization.
Another approach, which we employ here, is known as the \emph{stochastic finite element method} (SFEM).
We briefly sketch the main idea of SFEM under the assumption that $\sigma\in L^2(\Omega;L^\infty_+(D))$, which will be the case
in all our examples considered below. 
The forward problem describing EIT with a stochastic conductivity is to find $u_{\sigma(\omega)}^g\in \Hd^1(D)$ that solves
\begin{align}\labeq{stochEIT}
\nabla\cdot  \sigma(\omega) \nabla u_{\sigma(\omega)}^g  = 0 \quad {\rm in} \ D, \qquad \sigma(\omega) \partial_\nu u_{\sigma(\omega)}^g = g \quad {\rm on} \ \partial D
\end{align}
$P$-almost surely for a given current density $g \in \Ld^2(\partial D)$, which is equivalent to the standard variational formulation
\[
\int_D \sigma(\omega) \nabla u_{\sigma(\omega)}^g(x) \cdot \nabla \phi(x)  \dx \, 
  = \int_D g(x) \phi(x) \dx \qquad \text{ for all } \phi\in \Hd^1(D)
\]
$P$-almost surely. This is in turn equivalent to the extended variational formulation of finding $u_{\sigma}^g\in L^2(\Omega;\Hd^1(D))$ such that
\begin{equation}\labeq{ext_var}
\int_\Omega \int_D \sigma(\omega) \nabla u_{\sigma(\omega)}^g(x) \cdot \nabla \phi(x,\omega)  \dx  \dx[P] 
  = \int_\Omega  \int_D g(x) \phi(x,\omega) \dx \dx[P] 
\end{equation}
for all $\phi\in L^2(\Omega;\Hd^1(D))$.
The key idea of the stochastic finite element method is to approximate statistical properties, such as moments, of the solution to \req{stochEIT}
by applying the Galerkin method to the extended variational formulation \req{ext_var}.
Hence, we choose finite dimensional subspaces $V_\diamond^h\subset \Hd^1(D)$ and $S_m\subset L^2(\Omega)$
and determine the function $\tilde u_{\sigma}^g\in V_\diamond^h \otimes S_m\subset L^2(\Omega;\Hd^1(D))$ that solves \req{ext_var}
for all $\phi\in V_\diamond^h \otimes S_m\subset L^2(\Omega;\Hd^1(D))$.

As $V^h$ we use a standard piecewise linear finite element space. 
In all our examples below, the probability space $(\Omega,\Sigma,P)$ can be parametrized by a finite number of 
independent random variables 
$\theta_q:\Omega \rightarrow \Theta_q\subseteq\R$, $q=1,\ldots,Q$. Accordingly, we choose $S_m$ to be the space of all $Q$-variate polynomials with total degree less than or equal to $m$; the corresponding basis functions are chosen to be orthonormal,~i.e.,~suitably scaled multivariate Legendre polynomials in our setting.
Take note that the dimension of $S_m$ is
\[
M = \left(
\begin{array}{c}
\!\!\! Q + m \!\!\! \\
Q
\end{array}
\right)
= \frac{(Q + m)!}{Q! \, m!} \, .
\]
For more general conductivities one can consider, e.g.,\ truncated Karhunen--Lo\`{e}ve expansions of the random conductivity 
field in question.

Both the Factorization and the Monotonicity Method require only the difference operator $\E(\Lambda(\sigma)) - \Lambda(1)$ as the input.
Instead of approximating $u_{\sigma}^g$ as described above in order to calculate the expectation of $\Lambda(\sigma)$ and then subtracting the deterministic Neumann-to-Dirichlet map corresponding to the unit conductivity, a numerically more stable method is to directly compute the expectation $\E(\Lambda(\sigma) - \Lambda(1))$. This can be done by considering the boundary value problem satisfied by the difference potential (cf.,~e.g.,~\cite[Section~3]{Gebauer07})
  \begin{equation}\label{weak_w}
  \nabla\cdot  \sigma \nabla w_\sigma^g  = \nabla \cdot (\sigma - 1) \nabla u_0^g \quad {\rm in} \ D, \qquad \sigma \partial_\nu w_\sigma^g = 0 \quad {\rm on} \ \partial D ,
  \end{equation}
where $u_{0}^g:=u_{\sigma_0}^g$ is the solution to \eqref{eq:EIT} with background conductivity $\sigma_0 \equiv 1$.

Our numerical experiments are conducted in the unit disk $D \subset \R^2$, for which the most natural $L^2(\partial D)$-normalized boundary currents are the Fourier basis functions
  \begin{equation}
  \label{four_basis}
  g_{2t-1} = \frac{1}{\sqrt{\pi}} \sin(t \gamma), \quad g_{2t} = \frac{1}{\sqrt{\pi}} \cos(t \gamma), \qquad t=1,\ldots,T
  \end{equation}
for some $T \in \N$ and with $\gamma$ denoting the angular coordinate.
For these current densities, the gradient of the background solution $u_0^g$ can be presented in a closed form, which can be used explicitly on the right hand side of the variational formulation of \eqref{weak_w}. 
Using a stochastic finite element method as explained above, we solve \eqref{weak_w} for all current patterns in \eqref{four_basis}, 
and then expand the expectation value of the correspondingly approximated difference boundary data
$\E(\tilde{w}_\sigma^{g_t})$ in the basis \eqref{four_basis} by using FFT. This results in an approximate representation $L \in \R^{2T \times 2T}$ of $\E(\Lambda(\sigma)) - \Lambda(1): L^2_\diamond(\partial D) \to L^2_\diamond(\partial D)$ with respect to the Fourier basis~\eqref{four_basis}.

  \begin{figure}[t]
  \centering
  \includegraphics[scale=0.95]{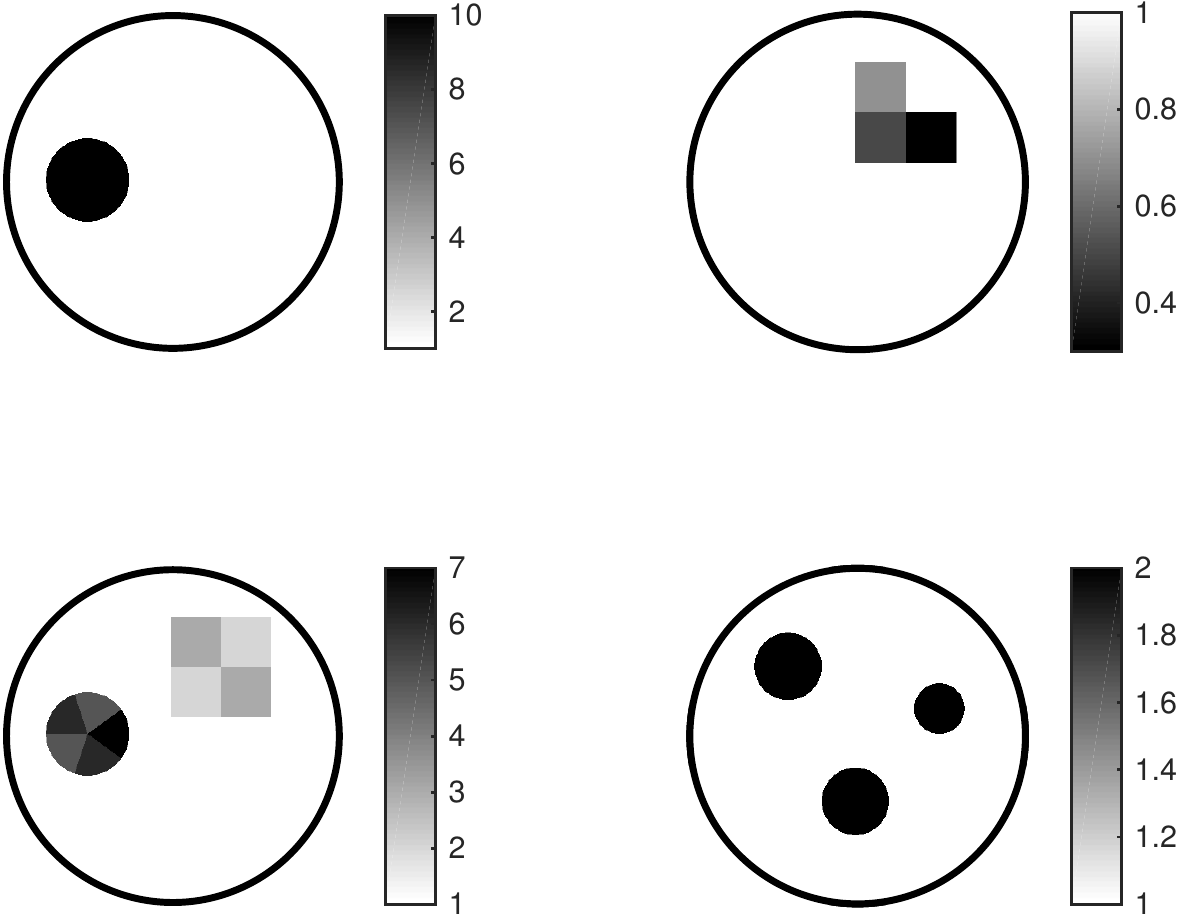}
  \caption{Deterministic inclusion geometries and mean conductivities. The random vector consists of 1,3,9 or 3 independent components that are uniformly distributed.}
  \label{fig:geom}
  \end{figure}

The considered inclusion geometries are illustrated in Figure~\ref{fig:geom}. In each case, the anomalies consist of a finite number of subregions, or pixels whose (constant) conductivity levels are mutually independent uniformly distributed random variables.
Thus, the random conductivity can be written as 
\begin{equation}
\label{num_sigma}
\sigma(\theta) = 1 + \sum_{q=1}^Q \theta_q \chi_q,
\end{equation}
where $\chi_q$ is the indicator function of the $q$th pixel.
The number of pixels $Q$ varies between one and nine depending on the configuration.

In all numerical tests we use third order stochastic basis polynomials, i.e., we choose $m=3$, and the deterministic FEM meshes are such that
the dimension of $V_h$ is approximately $33000$.
The resulting SFEM system matrices are very sparse \cite{Mustonen16}.
The number of employed Fourier basis functions is $2T = 100$ in all tests.

In addition to considering the unperturbed simulated measurement matrix $L \in \R^{2T \times 2T}$ that contains only numerical errors, we also test the Factorization Method after adding $0.1\%$ noise to $L$. More precisely, we generate a random matrix $E\in \R^{2T \times 2T}$ with uniformly distributed elements in the interval $[-1,1]$ and replace $L$ by its noisy version
\[
L_\epsilon := L + 10^{-3}\norm{L}_2\frac{E}{\norm{E}_2},
\]
where $\| \cdot \|_2$ denotes the spectral norm.
This is the same noise model as in \cite{Gebauer07} for the deterministic Factorization Method.

\subsection{Factorization Method}

Recall that Theorem~\ref{thm:factorization} provides a binary test for deciding whether a point is inside the anomaly $A$ or not:
Assuming that \eqref{stoch_cond} holds for some $\alpha > 0$, then for any $z \in D \setminus \partial A$, 
\[
\Phi_{z,d}|_{\partial D} \in \range \left( \left| \E(\Lambda(\sigma))- \Lambda(1) \right|^{1/2} \right)  \quad \Longleftrightarrow 
\quad z \in D.
\]
To numerically implement this test, we follow~\cite{Gebauer07}. The boundary potential $\Phi_{z,d}|_{\partial D}$ belongs to $\range ( | \E(\Lambda(\sigma))- \Lambda(1)|^{1/2} )$ if and only if it satisfies the so-called {\em Picard criterion}, which compares the decay of the scalar projections of $\Phi_{z,d}|_{\partial D}$ on the eigenfunctions of the compact and self-adjoint operator $|\E(\Lambda(\sigma))- \Lambda(1) |^{1/2}$ to the decay of the corresponding eigenvalues. The leading idea is to write an approximate version of the Picard criterion by using the singular values and vectors of $L \in \R^{2T \times 2T}$ in place of the eigensystem of $|\E(\Lambda(\sigma))- \Lambda(1) |$. We refer to \cite{Lechleiter06} for a proof that this approach can be formulated as a regularization strategy.

Let us introduce a singular value decomposition for the discretized boundary operator $L\in \R^{2T \times 2T}$, that is,
\[
L v_t = \lambda_t u_t, \qquad L^{\rm T} u_t = \lambda_t v_t, \qquad t=1,\ldots, 2T,
\]
with nonnegative singular values $\{ \lambda_t\} \subset \R$, which are sorted in decreasing order, and orthonormal bases $\{ u_t\}, \{ v_t\}\subset \R^{2 T}$.
Denoting by $F_{z,d} \in \R^{2T}$ the Fourier coefficients of $\Phi_{z,d}$,
we introduce an indicator function
\[
{\rm Ind}(y; d,\tau) :=
\sum_{t=1}^\tau (F_{z,d} \cdot v_t)^2 \Big/ \sum_{t=1}^\tau \frac{(F_{z,d} \cdot  v_t)^2}{\lambda_t},
\]
where $1 \leq \tau \leq 2T$ should be chosen so that $\lambda_{\tau+1}$ is the first singular value below the expected measurement/numerical error. Note that the formation of $F_{z,d}$ is trivial because the functional form of $\Phi_{z,d}$ is known explicitly when $D$ is the unit disk (cf.,~e.g.,~\cite{Bruhl00}). It is to be expected that ${\rm Ind}: D \to \R_+$ takes larger values inside than outside the anomaly $A$ (cf.,~e.g.,~\cite{Gebauer07}), meaning that plotting ${\rm Ind}$ should provide information on the whereabouts of $A$. In our numerical tests, we plot the indicator function on an equidistant grid that is chosen independently of the finite element meshes used for solving the forward problems. The employed dipole moments are $d_r = (\cos(2 \pi r /3), \sin(2 \pi r /3)) \in \R^2$ for $r \in \{1,2,3\}$ and the indicator function used in reconstructions is actually the sum of the indicator functions ${\rm Ind}(y;d_r,\tau)$ corresponding to those three dipole moments.
For all noiseless experiments, we use the maximal spectral cut-off index $\tau = 2T = 100$. (We remind the reader that the Factorization Method is not designed to produce information about the conductivity of the inhomogeneity. In other words, only the `support' of the indicator function is relevant.)

\begin{figure}[t]
  \centering
  \includegraphics[scale=0.95]{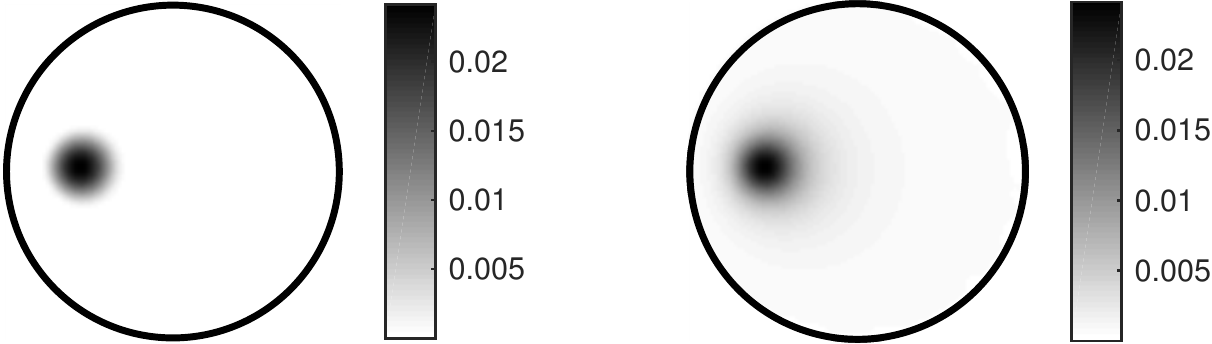}
  \caption{Reconstructions corresponding to the top left target conductivity in Figure~\ref{fig:geom}. Left: noiseless data. Right: noisy data. (See text for details.)}
  \label{fig:num1}
  \end{figure}

{\bf Test 1.} Our first and simplest test case considers one disk-shaped inclusion, as depicted in the top left image of Figure~\ref{fig:geom}.
We choose $\Theta_1 = [8,10] \subset \R$ so that all realizations of the stochastic inclusion significantly differ from the unit background conductivity. In this case, $\E(\sigma_A^{-1})^{-1} \approx 9.97$, which obviously satisfies part (a) of \eqref{stoch_cond}.
Figure~\ref{fig:num1} shows the indicator function for both noiseless and noisy ($\tau = 6$) Neumann-to-Dirichlet data; according to a visual inspection, the support of the indicator function matches well with the inclusion in both cases. This is not a big surprise: All possible realizations of the random conductivity satisfy the requirements of the {\em deterministic} Factorization Method, and thus one would expect reconstructions that are comparable to those in the deterministic setting (cf.,~e.g.,~\cite{Gebauer07}).

 \begin{figure}[t]
  \centering
  \includegraphics[scale=0.95]{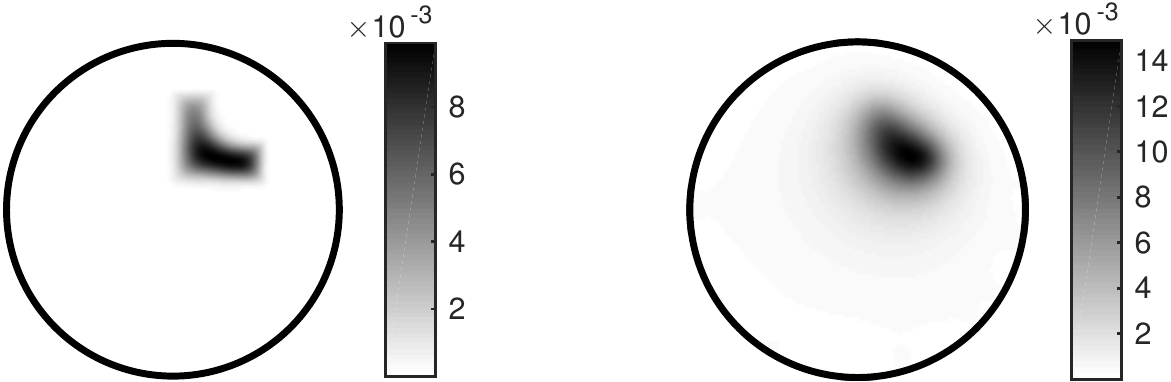}
  \caption{Reconstructions corresponding to the top right target conductivity in Figure~\ref{fig:geom}. Left: noiseless data. Right: noisy data. (See text for details.)}
  \label{fig:num2}
  \end{figure}

{\bf Test 2}. The second experiment considers the nonconvex inclusion 
in the top right image of Figure~\ref{fig:geom}.
The expected conductivity values of the three pixels constituting the anomaly are now strictly less than the unit background.
More precisely, the pixelwise values vary according to $\theta_1 \in [-0.99, -0.41]$, $\theta_2 \in [-0.99, -0.01]$ and $\theta_3 \in [-0.99, 0.39]$; see \eqref{num_sigma}. Note that the condition (b) of~\eqref{stoch_cond} is satisfied, even though one of the three pixels can have either positive or negative contrast. The reconstructions produced by the Factorization Method are presented in Figure~\ref{fig:num2}. Even without noise, the nonconvex edge of the inclusion is clearly smoothened, but otherwise the noiseless reconstruction is accurate. In the noisy case, the indicator function ($\tau = 7$) contains practically no information on the nonconvexity of the inclusion, but the size and the location of the anomaly are still reproduced accurately. 

\begin{figure}[t]
  \centering
  \includegraphics[scale=0.95]{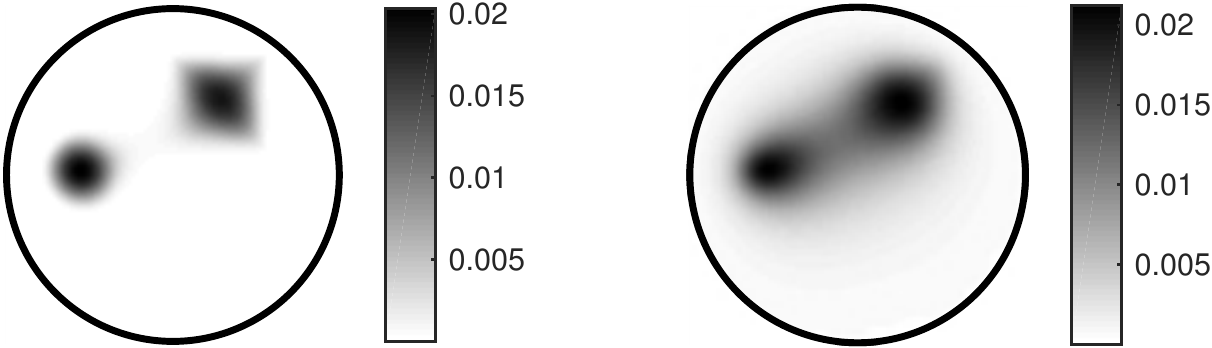}
  \caption{Reconstructions corresponding to the bottom left target conductivity in Figure~\ref{fig:geom}. Left: noiseless data. Right: noisy data. (See text for details.)}
  \label{fig:num3}
  \end{figure}

{\bf Test 3.} In the third example, we have two separate inclusions and altogether nine random variables as shown in the bottom left image of Figure~\ref{fig:geom}. The conductivity intervals have width of six units for the pixels in the disk and width of three units for those in the square. All realizations of the discoidal inclusion are thus more conductive than the background, while the conductivity contrasts of some pixels in the square anomaly may be either positive or negative. The condition (a) of~\eqref{stoch_cond} is anyway satisfied: $\E(\sigma_A^{-1})^{-1} > 4.3$ for the disk, and $\E(\sigma_A^{-1})^{-1} > 1.5$ for the square. The reconstruction from noiseless data in Figure~\ref{fig:num3} accurately indicates the shapes and sizes of the two inclusions. 
With noisy data, the reconstruction ($\tau = 12$) tends to produce one large inclusion instead of two smaller ones. The Factorization Method has been observed to exhibit similar bridging behavior also in the deterministic setting (see,~e.g.,~\cite{Gebauer07}).

 \begin{figure}[t]
  \centering
  \includegraphics[scale=0.95]{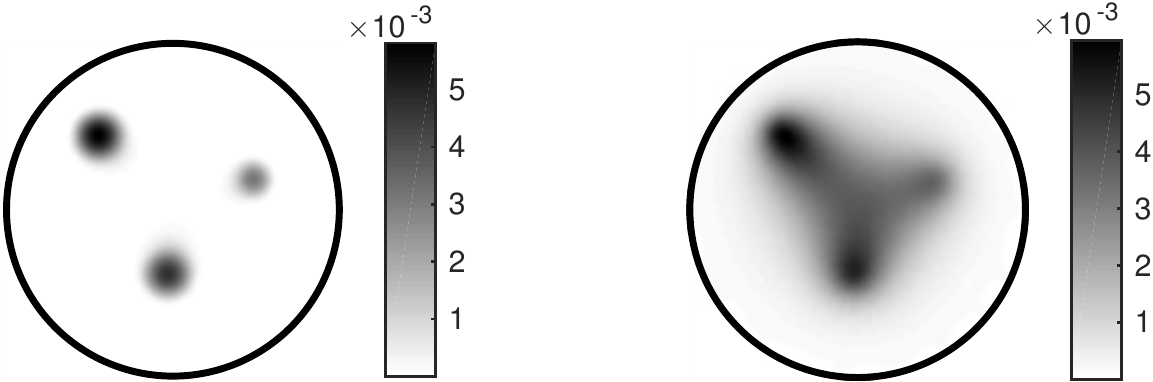}
  \caption{Reconstructions corresponding to the bottom right target conductivity in Figure~\ref{fig:geom}. Left: noiseless data. Right: noisy data. (See text for details.)}
  \label{fig:num4}
  \end{figure}

{\bf Test 4.} Finally, we test the Factorization Method for three well separated disk-like inclusions, each characterized by a uniformly distributed conductivity in the interval $[0.5,3.5]$; see  the bottom right image of Figure~\ref{fig:geom}.
Note that the condition (a) of~\eqref{stoch_cond} is satisfied as $\E(\sigma_A^{-1})^{-1} \approx 1.54$, although each realization of the anomaly can have positive, negative or indefinite contrast compared to the unit background.
Once again, the noiseless reconstruction in Figure~\ref{fig:num4} is very accurate, but measurement noise deteriorates the representation of the inclusion boundaries in the plot of the indicator function ($\tau = 11$).

\section{Concluding remarks}
We have shown that an application of the Factorization Method or the Monotonicity Method to the mean value of the Neumann-to-Dirichlet boundary map characterizes stochastic anomalies, i.e., inclusions with random conductivities but fixed shapes, assuming that the contrast of the anomalies is high enough in the sense of expectation. This theoretical result was complemented by two-dimensional numerical examples demonstrating the functionality of the Factorization Method in this stochastic setting.

\bibliography{literaturliste}
\bibliographystyle{abbrv}

\end{document}